\theoremstyle{plain}
\newtheorem{theorem}{Theorem}
\numberwithin{equation}{section}
\numberwithin{theorem}{section}
\numberwithin{figure}{section}
\newcounter{mycount}
\newenvironment{numlist}{\begin{list}{\arabic{mycount}.}%
   {\usecounter{mycount}\labelwidth=1cm\itemsep 0pt}}{\end{list}}
\newenvironment{letlist}{\begin{list}{\rm(\alph{mycount})}%
   {\usecounter{mycount}\labelwidth=1cm\itemsep 0pt}}{\end{list}}
\newcommand\s{\sigma}
\newcommand\ul{\underline}
\newcommand\ol{\overline}
\newcommand\oo{\infty}
\newcommand\De{\Delta}
\newcommand\LL{{\mathbb L}}
\newcommand\HH{{\mathbb H}}
\newcommand\NN{{\mathbb N}}
\newcommand\sA{{\mathcal A}}
\newcommand\ZZ{{\mathbb Z}}
\newcommand\lf{\lambda_{\text{\rm f}}}
\newcommand\wt{\widetilde}
\newcommand\om{\omega}
\newcommand\es{\varnothing}
\newcommand\sm{\setminus}
\renewcommand\a{\alpha}
\newcommand\si{\sigma}
\renewcommand\b{\beta}
\newcommand\g{\gamma}
\newcommand\resp{respectively}
\newcommand\Aut{\text{\rm Aut}}
\newcommand\loopgraph{bridge graph}
\newcommand\LG{{\mathbb B}}
\begin{document}
\title[Bounds on connective constants]{Bounds on connective constants\\ of regular graphs}
\author[Grimmett]{Geoffrey R.\ Grimmett}
\address{Statistical Laboratory, Centre for
Mathematical Sciences, Cambridge University, Wilberforce Road,
Cambridge CB3 0WB, UK} 
\email{\{g.r.grimmett, z.li\}@statslab.cam.ac.uk}
\urladdr{\url{http://www.statslab.cam.ac.uk/~grg/}}
\urladdr{\url{http://www.statslab.cam.ac.uk/~zl296/}}

\author[Li]{Zhongyang Li}

\begin{abstract}
The \emph{connective constant} $\mu$
of a graph $G$ is the asymptotic growth
rate of the number of self-avoiding walks on $G$
from a given starting vertex.
Bounds are proved for the connective constant of an infinite,  
connected, $\De$-regular graph $G$. The main result is
that $\mu \ge \sqrt{\De-1}$ if $G$ is vertex-transitive and simple.
This inequality is proved subject to weaker conditions
under which it is sharp.

\end{abstract}

\date{23 October 2012, minor revision 6 April 2013}

\keywords{Self-avoiding walk, connective constant, cubic graph, regular graph, vertex-transitive graph, 
quasi-transitive graph, Cayley graph}
\subjclass[2010]{05C30, 82B20}

\maketitle

\section{Introduction}\label{sec:intro}
A self-avoiding walk (SAW) is a path on a graph that visits no vertex more than once. SAWs
were introduced as a model for long-chain polymers in chemistry (see \cite{f}),
and have since been studied intensively by mathematicians and physicists
interested in their critical behaviour (see \cite{ms}). 
If the underlying graph has a property of periodicity, the asymptotic behaviour
of the number of SAWs of length $n$
(starting at a given vertex) is exponential in $n$, 
with growth rate called the \emph{connective constant}
of the graph. The main purpose of this paper is
to explore upper and lower bounds for connective constants. 

The principal result of this paper is the following lower bound for
the connective constant $\mu$ of a $\De$-regular graph. The complementary upper bound
$\mu\le \De-1$ is very familiar.

\begin{theorem}\label{thm:cor0}
Let $\De \ge 2$, and let $G$ be an infinite, connected, 
$\De$-regular, vertex-transitive, simple graph. 
Then $\mu(G) \ge \sqrt{\De-1}$. 
\end{theorem}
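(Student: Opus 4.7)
The plan is to reduce the problem, via a Hammersley--Welsh-type argument, to lower-bounding the count of certain \emph{bridges} from a fixed root, and then to exploit the vertex-transitivity and simplicity of $G$ to construct at least $(\De-1)^{n/2}$ such bridges of length $n$.

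First, by vertex-transitivity the count $\sigma_n$ of $n$-step SAWs from any vertex is independent of the vertex, and the standard submultiplicativity $\sigma_{n+m}\le\sigma_n\sigma_m$ yields the existence of $\mu=\lim_n\sigma_n^{1/n}$. I would then introduce a height function $h:V(G)\to\ZZ$---via a homomorphism if $G$ is a Cayley graph of a group with infinite abelianisation, or via graph distance to a distinguished end in the more general vertex-transitive setting---and define a bridge of length $n$ rooted at $o$ to be a SAW $o=v_0,v_1,\dots,v_n$ satisfying $h(o)<h(v_i)\le h(v_n)$ for each $i\ge 1$. Two bridges (the second beginning where the first ends) concatenate to a bridge since they occupy disjoint half-spaces with respect to $h$ apart from the common endpoint; this yields $b_{n+m}\ge b_n b_m$ for the bridge count $b_n$, and the usual Hammersley--Welsh estimate identifies $\lim b_n^{1/n}=\mu$.

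Next I would attempt to construct at least $(\De-1)^{n/2}$ bridges of length $n$. Simplicity enters here in a crucial way: at each interior vertex of a SAW, the $\De-1$ non-backtracking neighbours are distinct vertices of $G$ (no loops, no parallel edges). A natural construction is to alternate a `free' branching step (choosing one of the $\De-1$ non-backtracking neighbours) with a `corrective' step chosen so as to increase $h$ strictly, thereby guaranteeing the bridge condition. The $\lfloor n/2\rfloor$ free steps then contribute a factor of $(\De-1)^{\lfloor n/2\rfloor}$, matching the target bound up to a constant.

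The main obstacle will be ensuring that this alternating construction actually yields \emph{self-avoiding} walks when $G$ has small girth: short cycles can make candidate walks revisit earlier vertices in spite of the height constraint. The key technical step would therefore be either an injective encoding of the candidate walks into genuine bridges, or a counting/second-moment argument showing that only a subexponential fraction of candidates is lost to self-intersection. The abstract's remark that the inequality is sharp under weaker hypotheses suggests the $(\De-1)^{n/2}$ bound is indeed tight in the worst case, so the construction must faithfully reflect the worst-case scenario in which roughly half of the edges of a SAW are `consumed' by local cycle structure.
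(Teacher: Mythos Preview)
Your proposal has two genuine gaps, and the paper's proof proceeds along entirely different lines.

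\textbf{First gap: the height function.} You assume the existence of an integer-valued height function $h:V\to\ZZ$ with the concatenation property needed for a Hammersley--Welsh argument. For $\ZZ^d$ one uses a coordinate; for a Cayley graph of a group with infinite abelianisation one can use a homomorphism to $\ZZ$. But an arbitrary infinite, connected, vertex-transitive, simple, $\De$-regular graph need not admit such a function: the underlying group (if there is one) may have finite abelianisation, the graph may not be a Cayley graph at all, and ``graph distance to a distinguished end'' does not in general give a function for which bridges from $o$ to $x$ and from $x$ to $y$ occupy disjoint half-spaces. Without this, neither the supermultiplicativity $b_{m+n}\ge b_m b_n$ nor the identification $\lim b_n^{1/n}=\mu$ is available.

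\textbf{Second gap: the branching construction.} You correctly identify the main obstacle---that alternating a free branching step with a corrective height-increasing step need not yield self-avoiding walks when the girth is small---but you do not resolve it. The suggested fixes (an injective encoding, or a second-moment argument) are not supplied, and it is precisely here that the entire difficulty of the problem lies. As stated, the proposal is a plan rather than a proof.

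\textbf{What the paper does instead.} No height function and no bridges are used. The paper introduces a local condition $\Pi$: along any infinite SAW $\pi$ from $v$, at each vertex $w$ the ``red'' edges (those not leading to an infinite extension of $\pi_w$) can be matched injectively to distinct edges returning to $\pi_w$ via SAWs disjoint from $\pi_w$. A counting argument then shows that $\Pi$ forces at least $n(\De-2)$ ``blue'' (extendable) branches along any extendable $2n$-step SAW, and an elementary induction converts this into at least $(\De-1)^n$ extendable $2n$-step SAWs from $v$, giving $\ul\mu(v)\ge\sqrt{\De-1}$. To verify $\Pi$ for simple vertex-transitive $\De$-regular graphs, the paper invokes a result of Babai and Watkins that such graphs have edge-connectivity $\lf=\De$, and then Menger's theorem yields $\De$ edge-disjoint infinite SAWs from every vertex; this immediately implies $\Pi$. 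Simplicity enters only through the Babai--Watkins lemma, not through any parity or girth consideration.
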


The problem of counting SAWs is linked in two ways to the study of interacting disordered systems such as
percolation and Ising/Potts models. First, the numerical value of $\mu$ leads  to
bounds on critical points of such models (see \cite[eqns (1.12)--(1.13)]{G99} for percolation, and
hence Potts models via \cite[eqn (5.8)]{G-rcm}). Secondly, the SAW problem may be phrased
in terms of the SAW generating function; this has radius of convergence $1/\mu$, and the singularity is
believed to have power-law behaviour (for lattice-graphs such as $\ZZ^d$,
at least), see \cite{ms}. Thus, a lower bound for $\mu$
may be viewed as an upper bound for the critical point of a certain combinatorial problem. 
 
In Section 2, we introduce notation and definitions used
throughout this paper, and in Section 3 we prove a theorem 
concerning connective constants of general graphs.
Inequalities for the connective constant $\mu(G)$ of a $\De$-regular
graph $G$ are explored in Section \ref{sec:regular}, including a re-statement and discussion of
Theorem \ref{thm:cor0}.  It is shown 
at Theorem \ref{thm:upperbnd} that a quasi-transitive,
$\De$-regular graph $G$
satisfies $\mu(G) = \De-1$ if and only if $G$ is the $\De$-regular tree.
The proofs of results in Section \ref{sec:regular} are found in Section \ref{sec:regpf}.

There are two companion papers, \cite{GrL2,GrL3}. In \cite{GrL2}, we  use the Fisher transformation in
the context of SAWs on a cubic or partially cubic graph. In particular,
we calculate the connective constant of a 
certain lattice obtained from the hexagonal
lattice by applying the Fisher transformation at alternate vertices.
In \cite{GrL3}, we study strict inequalities between connective constants. 
It is shown (subject to minor conditions) that 
$\mu(G_2) < \mu(G_1)$ if either: (i)  $G_2$ is the quotient graph
of $G_1$ with respect to a non-trivial
normal subgroup of its automorphism group, or (ii) $G_1$
is obtained from $G_2$ by the addition of a further quasi-transitive family of edges. 

\section{Notation}\label{sec:notation}

All graphs in this paper will be assumed 
infinite, connected, and loopless  (a \emph{loop} is an edge both of whose endpoints are the same vertex).
In certain circumstances, they are permitted to have
\emph{multiple edges} (that is, two or more edges with the same endpoints). 
A graph $G=(V,E)$ is called \emph{simple} if it
has neither loops nor multiple edges. 
An edge $e$ with endpoints $u$, $v$ is written $e=\langle u,v \rangle$,
and two edges with the same endpoints are said to be \emph{parallel}.
If $\langle u,v \rangle \in E$, we call $u$ and $v$ \emph{adjacent}
and write $u \sim v$.
The \emph{degree} of vertex $v$ is the number of edges
incident to $v$, denoted $\deg(v)$. 
We assume that the vertex-degrees of a given graph $G$
are finite with supremum $\De$,
and shall often (but not always) assume $\De < \oo$.  
The \emph{graph-distance} between two vertices $u$, $v$ is the number of edges
in the shortest path from $u$ to $v$, denoted $d_G(u,v)$.

A \emph{walk} $w$ on $G$ is
an alternating sequence $v_0e_0v_1e_1\cdots e_{n-1} v_n$ of vertices $v_i$
and edges $e_i$ such that $e_i=\langle v_i, v_{i+1}\rangle$.
We write $|w|=n$ for the \emph{length} of $w$, that is, the number of edges in $w$.
The walk $w$ is called \emph{closed} if $v_0=v_n$. A \emph{cycle}
is a closed walk $w$ with $v_i\ne v_j$ for $1 \le i < j \le n$.
Thus, two parallel edges form a cycle of length $2$.

Let $n \in \{1,2,\dots\}\cup\{\oo\}$. 
An $n$-step self-avoiding walk (SAW) 
on $G$ is  a walk containing $n$ edges
that includes no vertex more than once.
Let $\s_n(v)$ be the number of $n$-step SAWs 
 starting at $v\in V$. We are interested here in the exponential growth rate
of $\s_n(v)$, and thus we define 
$$
\ul\mu(v)= \liminf_{n \to \oo} \s_n(v)^{1/n}, \qquad 
\ol\mu(v)= \limsup_{n \to \oo} \s_n(v)^{1/n}.
$$

There is a choice over the most useful way to define the connective constant
of an arbitrary graph. One such way is as the constant $\mu=\mu(G)$ given by
\begin{equation}
\label{connconst}
\mu(G) :=  \lim_{n\to\oo} \left(\sup_{v\in V} \s_n(v)^{1/n}\right).
\end{equation}
The limit in \eqref{connconst} exists for any graph by the
usual argument using subadditivity  
(see the proof of Theorem \ref{thm:infsup}(b)).
By Theorem \ref{jmh}, \eqref{connconst} provides an appropriate definition 
of the connective constant of a quasi-transitive graph.
The constant $\ol\mu$ of the forthcoming equation \eqref{connconst2} 
provides an alternative definition which is more suitable for a random
graph of the type studied by Lacoin \cite{Lac}.

It will be convenient to consider also SAWs starting at `mid-edges'. We identify the
edge $e$ with a point (also denoted $e$) placed at the middle of $e$, 
and then consider walks that start and end at 
these mid-edges. Such a  walk is 
called \emph{self-avoiding} if it visits no mid-edge or vertex more than once,
and its \emph{length} is the number of vertices visited.

The automorphism group of the graph $G=(V,E)$ is
denoted $\Aut(G)$. A subgroup $\sA \subseteq \Aut(G)$ is said to \emph{act
transitively} on $G$ 
if, for $v,w\in V$, there exists $\a \in \sA$ with $\a v=w$.
It acts \emph{quasi-transitively} if there exists a finite subset $W \subseteq V$ such that,
for $v \in V$ there exists $\a\in\sA$ such
that $\a v \in W$. The graph is 
called \emph{vertex-transitive} 
(\resp, \emph{quasi-transitive}) if $\Aut(G)$
acts transitively (\resp, quasi-transitively).

\section{Basic facts for general graphs}\label{sec:basic}

Hammersley \cite{jmhII}  proved that $\s_n(v)^{1/n} \to \mu$ for a class of
graphs including quasi-transitive graphs. 

\begin{theorem}\cite{jmhII}\label{jmh}
Let $G=(V,E)$ be an infinite, connected, quasi-transitive graph with finite vertex-degrees. Then
\begin{equation}\label{connconst0}
\lim_{n \to \oo} \s_n(v)^{1/n} = \mu, \qquad v \in V,
\end{equation}
where $\mu$ is given in \eqref{connconst}.
\end{theorem}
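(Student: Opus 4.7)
The plan is to combine Fekete-type submultiplicativity with the orbit structure forced by quasi-transitivity. First, set $a_n:=\sup_{v\in V}\s_n(v)$. Any $(m+n)$-SAW from $v$ has an intermediate vertex at step $m$, its first $m$ edges forming an $m$-SAW from $v$ and its last $n$ edges an $n$-SAW from that intermediate vertex. Summing over intermediates gives $\s_{m+n}(v)\le\s_m(v)\cdot a_n$, hence $a_{m+n}\le a_m a_n$. Since $a_n\le\De(\De-1)^{n-1}<\oo$, Fekete's lemma yields $a_n^{1/n}\to\mu:=\inf_n a_n^{1/n}\in[0,\De-1]$, matching \eqref{connconst}.

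Next, quasi-transitivity provides a finite set $W=\{w_1,\dots,w_k\}$ of orbit representatives, and for any $\a\in\sA$ the map $\eta\mapsto\a\eta$ is a length-preserving bijection on SAWs, so $\s_n(\a v)=\s_n(v)$. Hence $\s_n$ is constant on orbits, takes at most $k$ distinct values, and $a_n=\max_i\s_n(w_i)$. In particular $\limsup_n\s_n(v)^{1/n}\le\mu$ for every $v$, and $\max_i\s_n(w_i)^{1/n}\to\mu$, so at least one orbit attains the growth rate $\mu$ in limsup.

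The remaining task is the matching $\liminf$ bound at every starting vertex, and this is the hard part. The plan is to show that for any $u,v\in V$ with $d_G(u,v)=d$, there is a subexponential factor $c(n)$, with $c(n)^{1/n}\to 1$, satisfying $\s_{n+d}(u)\ge c(n)\,\s_n(v)$: fix a $u$--$v$ geodesic $\pi$ and observe that any $n$-SAW $\eta$ from $v$ which avoids the $d-1$ internal vertices of $\pi$ gives, by prepending $\pi$, an $(n+d)$-SAW from $u$. A union bound over the forbidden internal vertices, combined with the submultiplicative inequality of the first step, would then control the number of excluded $\eta$. Applied between pairs of orbit representatives, this transfers the growth rate $\mu$ from the maximizing orbit to every orbit and yields $\s_n(v)^{1/n}\to\mu$ for every $v$.

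The main obstacle is the avoidance estimate in this final step: the crude union bound $\#\{n\text{-SAWs from }v\text{ visiting a fixed }x\}\le\sum_k\s_k(v,x)\s_{n-k}(x)\le(n+1)\,a_{\lfloor n/2\rfloor}^2$ is of the same exponential order as $a_n$ itself (only a polynomial factor weaker), so it fails when $\s_n(v)$ is substantially smaller than $a_n$. Hammersley's remedy is to work with a finer, genuinely submultiplicative quantity -- bridges, or SAWs starting at mid-edges as anticipated in the paper's next subsection -- to which Fekete's lemma can be applied orbit-by-orbit, so that the length correction from concatenation is polynomial and vanishes after taking $n$th roots.
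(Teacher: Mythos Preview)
Your first two steps (submultiplicativity of $a_n:=\sup_v\s_n(v)$ to obtain the limit $\mu$, and the orbit collapse under quasi-transitivity giving $\limsup_n\s_n(v)^{1/n}\le\mu$) are correct and standard; they match the paper's setup in the proof of Theorem~\ref{thm:infsup}.

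The gap is precisely where you place it, in the $\liminf$ step, but your diagnosis of the remedy is off. Hammersley does \emph{not} pass to bridges or mid-edge walks for this theorem. The paper does not give a self-contained proof of Theorem~\ref{jmh}, but it reproduces Hammersley's key device in the proof of Theorem~\ref{thm:infsup}: the \emph{first-visit decomposition} at a neighbour. For adjacent $u\sim v$, classify an $n$-SAW from $u$ by whether it visits $v$ and, if so, at which step $m$; prepending the edge $\langle v,u\rangle$ to the initial segment yields
\[
\s_n(u)\;\le\;\s_{n+1}(v)+\sum_{m=1}^{n-1}\s_m(v)\,\s_{n-m}(v)+\s_n(v),
\]
which is inequality~\eqref{15}. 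Combined with the crude bound $\s_j(w)\le C\tau^j$ for $\tau>\mu$ (applied to the larger index in each product), this gives $\s_{2k}(u)\le C'k\,\tau^k\,\s_k(v)$, and hence $\ul\mu(u)^2\le\mu\,\ul\mu(v)$; see \eqref{13}. The point is that no avoidance estimate is ever needed: instead of forcing SAWs from $v$ to miss a prescribed geodesic, one bounds SAWs from $u$ by when they first meet $v$, and both pieces are then genuine SAWs from $v$.

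For quasi-transitive $G$ one then iterates \eqref{15} along paths between the finitely many orbit representatives, the polynomial prefactors vanishing under $n$th roots; this is exactly the content of Theorem~\ref{thm:infsup} and the remark that its proof ``is in fact only a minor variation of Hammersley's proof of Theorem~\ref{jmh}''. Your geodesic-prepending idea fails for the reason you identify, and the speculative fix via bridges is a different (and here unnecessary) technology.
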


The picture is incomplete in the absence of quasi-transitivity. The following ancillary
result provides a partial connection between the $\ol\mu(v)$ and the $\ul\mu(v)$.
The proof appears later in this section.

\begin{theorem}\label{thm:infsup}
Let $G=(V,E)$ be an infinite, connected graph with finite vertex-degrees, 
and assume there exists $v \in V$ with $\ol \mu(v)< \oo$.
\begin{letlist}
\item  $\ol\mu(u) = \ol \mu(v)$ for all $u,v \in V$.
\item If $\ul \mu(v) = \mu$ for some $v \in V$, then $\ul\mu(v)=\mu$ for all
$v \in V$.
\end{letlist}
\end{theorem}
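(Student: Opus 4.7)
My plan is to prove both parts by a combinatorial ``surgery'' that converts each $n$-SAW starting at $u$ into walk-data based at $v$, and then to iterate along a shortest $u$-$v$-path.

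For part~(a), first reduce to the case $u\sim v$ (iteration then yields the general case, with finiteness of $\ol\mu$ propagating from $v$ to every intermediate vertex). For $u\sim v$ and any $n$-SAW $\omega=(u_0,\ldots,u_n)$ from $u_0=u$, split on whether $\omega$ visits $v$: if $v\notin\omega$ then $(v,u_0,\ldots,u_n)$ is an $(n+1)$-SAW from $v$; if $v=u_j$ for a unique $1\le j\le n$ then the reversed prefix $(v,u_{j-1},\ldots,u_0)$ and the suffix $(v,u_{j+1},\ldots,u_n)$ are two SAWs from $v$, of lengths $j$ and $n-j$, sharing only the basepoint $v$. Counting these injections (with convention $\sigma_0(v):=1$) gives
\[
\sigma_n(u)\,\le\,\sigma_{n+1}(v)+\sum_{j=1}^{n}\sigma_j(v)\,\sigma_{n-j}(v).
\]
Since $\ol\mu(v)<\infty$, for every $\epsilon>0$ there is $C_\epsilon$ with $\sigma_m(v)\le C_\epsilon(\ol\mu(v)+\epsilon)^m$ for all $m\ge 0$. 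Substituting, the right-hand side is bounded by $C'_\epsilon(n+1)(\ol\mu(v)+\epsilon)^{n+1}$; taking $n$-th roots, letting $n\to\infty$, then $\epsilon\to 0$, gives $\ol\mu(u)\le\ol\mu(v)$, and symmetry supplies the reverse inequality.

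For part~(b), I first need the constant $\mu$. Splitting an $(n+m)$-SAW at step $n$ shows $\tau_{n+m}\le\tau_n\tau_m$ with $\tau_n:=\sup_{w\in V}\sigma_n(w)$, so Fekete's lemma yields $\tau_n^{1/n}\to\mu$; since $\sigma_n(w)\le\tau_n$ we also deduce $\ol\mu(w)\le\mu$ for every $w$. The hypothesis $\ul\mu(v_0)=\mu$ then squeezes $\sigma_n(v_0)^{1/n}\to\mu$, and part~(a) upgrades this to $\ol\mu(u)=\mu$ for all $u$. For the matching lower bound $\ul\mu(u)\ge\mu$, apply the surgery of part~(a) with $u$ and $v_0$ swapped, giving, for $u\sim v_0$,
\[
\sigma_n(v_0)\,\le\,\sigma_{n+1}(u)+\sum_{j=1}^{n}\sigma_j(u)\,\sigma_{n-j}(u),
\]
and insert the lower bound $\sigma_n(v_0)\ge(\mu-\epsilon)^n$ valid at every sufficiently large $n$. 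This forces at least one of $\sigma_{n+1}(u)$ or some $\sigma_j(u)\sigma_{n-j}(u)$ to be at least $(\mu-\epsilon)^n/(n+1)$. I then convert this into a uniform lower bound on $\sigma_n(u)$ via the ``smoothness'' inequality $\sigma_{n+1}(u)\le(\Delta-1)\sigma_n(u)$ (obtained by noting that each extension of an $n$-SAW uses one of the at most $\Delta-1$ unused edges at its endpoint). Iteration along a shortest $u$-$v_0$-path finally handles general $u$.

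The main obstacle is this last interpolation step in part~(b). The surgery that controls $\ol\mu$ does not immediately transfer a $\liminf$, since ``$\sigma_n(v_0)$ large'' only forces $\sigma_m(u)$ to be large for \emph{some} $m$ near $n$. Promoting this to a lower bound at \emph{every} large $n$ requires the full force of the hypothesis $\ul\mu(v_0)=\mu$ (i.e., a lower bound holding at every large $n$, not merely along a subsequence), combined with smoothness to bridge gaps between consecutive lengths; this is where I expect the delicate work to lie.
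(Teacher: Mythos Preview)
For part~(a), your argument is essentially identical to the paper's: the same surgery on an $n$-SAW from $u$ according to whether and when it hits $v$, the same inequality (your sum to $j=n$ absorbs the paper's separate ``$v$ is the terminal vertex'' case via $\sigma_0(v)=1$), and the same conclusion via $n$-th roots and iteration along a path.

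For part~(b), your approach diverges from the paper's exactly at the point you flag as the obstacle. You swap $u$ and $v_0$ in the surgery, obtaining a \emph{lower} bound on a sum $\sum_j \sigma_j(u)\sigma_{n-j}(u)$, and then try to extract a lower bound on $\sigma_m(u)$ valid for \emph{every} large $m$. This can be made to work, but it is not immediate: knowing that some product $\sigma_j(u)\sigma_{n-j}(u)$ is large does not directly control $\sigma_m(u)$ at a prescribed $m$. You would need to combine the upper bound $\sigma_k(u)\le C\tau^k$ (forcing one factor of the product to be large) with a smoothness inequality (propagating that largeness downward to $m$), and then optimise over $n$. Note also that your stated smoothness $\sigma_{n+1}(u)\le(\Delta-1)\sigma_n(u)$ presumes bounded vertex-degree, which is not among the hypotheses; the correct substitute is $\sigma_{i+j}(u)\le\sigma_i(u)\,\sigma_j\le C\sigma_i(u)\tau^j$ for any $\tau>\mu$, which needs only $\mu<\infty$.

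The paper sidesteps the interpolation entirely by \emph{not} swapping. It keeps the inequality of part~(a) in its original direction, sets $n=2k$, and applies the smoothness $\sigma_{i+j}(w)\le C\sigma_i(w)\tau^j$ to the right-hand side: splitting the sum at $m=k$, every term $\sigma_m(v)\sigma_{2k-m}(v)$ is bounded by $C^2\sigma_k(v)\tau^k$. This yields
\[
\sigma_{2k}(u)\;\le\; C\,\sigma_k(v)\bigl(\tau^{k+1}+2kC\tau^k+\tau^k\bigr),
\]
and hence $\ul\mu(u)^2\le\ul\mu(v)\,\mu$ after taking $\liminf$ along a subsequence realising $\ul\mu(v)$ and letting $\tau\downarrow\mu$. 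If the special vertex is $u$ (so $\ul\mu(u)=\mu$), one reads off $\ul\mu(v)\ge\mu$ for each neighbour $v$, and iterates. The point is that an \emph{upper} bound on $\sigma_{2k}(u)$ in terms of a \emph{single} $\sigma_k(v)$ passes cleanly to a $\liminf$ statement; no interpolation is required.
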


Part (a) may be found also in \cite[Lemma 2.1]{Lac}, which appeared
during the writing of this paper. It is in fact only a minor variation
of Hammersley's proof of Theorem \ref{jmh}. 
Subject to the conditions of Theorem \ref{thm:infsup},
let $\ol\mu=\ol\mu(G)$ be given by
\begin{equation}\label{connconst2}
\ol\mu := \ol\mu(v), \qquad v \in V.
\end{equation}
By Theorem \ref{jmh}, $\ol\mu(G) = \mu(G)$ for
any quasi-transitive graph $G$.
The constant $\ol\mu$ is a more appropriate definition of `connective constant' for 
a random graph, for which it can be the case that $\ol\mu < \mu$; see \cite{Lac}. 

Assume that the supremum vertex-degree $\De$ satisfies $\De<\oo$. It is elementary that
\begin{equation}\label{1}
1\le \ul \mu(v) \le  \mu\le \De-1, \qquad v \in V.
\end{equation}

\begin{figure}[htb]
 \centering
\includegraphics[width=0.8\textwidth]{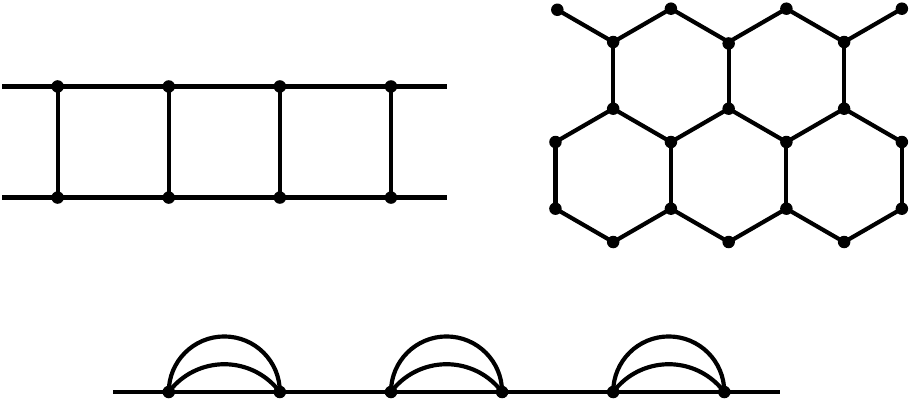}
  \caption{Three regular graphs: the  (doubly-infinite) ladder graph $\LL$;  
the hexagonal tiling $\HH$
of the plane; the \loopgraph\ $\LG_\De$ (with $\De=4$) obtained from $\ZZ$ by joining
every alternating pair of consecutive vertices by $\De-1$ parallel edges.}
  \label{fig:ladder-hex}
\end{figure}

The connective constant is known exactly for a limited class of 
quasi-transitive graphs, of which we mention
the \emph{ladder} $\LL$, the hexagonal lattice $\HH$, and the 
\emph{\loopgraph}  $\LG_\De$ with degree $\De\ge 2$ of Figure \ref{fig:ladder-hex}, for which
\begin{equation}\label{2}
\mu(\LL) = \tfrac12(\sqrt 5 + 1), \quad \mu(\HH) = \sqrt{2+\sqrt 2},
\quad \mu(\LG_\De) = \sqrt {\De-1}.
\end{equation}
See \cite[p.\ 184]{AJ90} and \cite{ds}
for the first two calculations. 
There is an extensive literature devoted to self-avoiding walks, 
including numerical upper and lower bounds for  connective constants, of which we mention
\cite{a04,bdgs,j04,ms}.

\begin{proof}[Proof of Theorem \ref{thm:infsup}]
We adapt and extend an argument of \cite{jmhII}. 
Let $u,v$ be neighbours
joined by an edge $e$.  Let $\pi$ be an $n$-step SAW from $u$.
Either $\pi$ visits $v$, or it does not. 
\begin{numlist}
\item If $\pi$ does not visit $v$, we prepend $e$ to obtain
an $(n+1)$-step SAW from $v$.
\item If $\pi$ visits $v$ after a number $m < n$ steps, we break $\pi$ after $m-1$ steps,
and prepend $e$ to the first subpath to obtain two SAWs from $v$: one of length $m$
and the other of length $n-m$.
\item If $\pi$ visits $v$ after $n$ steps, we remove the final edge and prepend $e$
to obtain an $n$-step SAW from $v$.
\end{numlist}
It follows that
\begin{equation}\label{15}
\s_{n}(u) \le \s_{n+1}(v) + \sum_{m=1}^{n-1} \s_m(v) \s_{n-m}(v)
+ \s_{n}(v).
\end{equation}

Suppose now that $\ol\mu(v)<\oo$, and let $\tau > \ol\mu(v)$. 
There exists
$C=C(\tau)<\oo$ such that
\begin{equation}\label{11}
\s_k(v) \le C \tau^k, \qquad k \ge 0.
\end{equation}
By \eqref{15},
\begin{equation}\label{12}
\s_{n}(u) \le C\tau^{n}(\tau + nC +1).
\end{equation}
Hence, $\ol\mu(u) \le \tau$ and therefore $\ol\mu(u) \le \ol\mu(v)$.
Part (a) follows since $G$ is connected and undirected.

Turning to part (b), we have in the usual way that
\begin{equation}\label{121}
\s_{m+n}(v) \le \s_m(v)\s_n,
\end{equation}
where $\s_n = \sup_{v\in V} \s_n(v)$.
Therefore, $\s_{m+n} \le \s_m \s_n$, whence the limit $\mu$ exists
in \eqref{connconst}.
We note in passing that
\begin{equation}\label{1215}
\s_n \ge \mu^n, \qquad n \ge 1.
\end{equation}
For $\tau>\mu$, there exists $C=C(\tau)<\oo$
such that
\begin{equation}\label{122}
\s_n \le C \tau^n, \qquad n \ge 0.
\end{equation}

Let $\tau > \mu$. By
\eqref{121}--\eqref{122}, there exists $C=C(\tau)<\oo$ such that
\begin{equation}\label{14}
\s_{i+j}(u) \le C\s_i(u) \tau^j, \qquad u \in V,\ i,j \ge 0.
\end{equation} 
Set $n=2k$ in \eqref{15}, and break the sum
into two parts depending on whether or not $m \le k$.
By \eqref{15} and \eqref{14},
\begin{equation}\label{13}
\s_{2k}(u) \le C\s_k(v) (\tau^{k+1} + 2kC \tau^{k} + \tau^k).
\end{equation} 
Therefore, $\ul\mu(u)^2 \le \ul\mu(v)\tau$, so that $\ul\mu(u)^2 \le \ul\mu(v)\mu$.
Assume that $u$ satisfies $\ul\mu(u) = \mu$. Then $\ul\mu(v) = \mu$, and
the claim follows by iteration.
\end{proof}

\section{Connective constants of regular graphs}\label{sec:regular}

The graph $G$ is \emph{regular} (or $\De$-\emph{regular})
if every vertex has the same degree $\De$. A $3$-regular graph
is called \emph{cubic}. In this
section, we investigate bounds for the connective constants of
infinite regular graphs. The optimal universal  lower bound,
even restricted to quasi-transitive graphs,  is of course the trivial  bound 
$\mu \ge 1$. This is achieved when $\De=3,4$ by the graphs of
Figure \ref{fig:regular3/4}, and by similar constructions for $\De \ge 5$.
Improved bounds may be proved when $G$ is assumed vertex-transitive.

\begin{figure}[htb]
 \centering
    \includegraphics[width=0.4\textwidth]{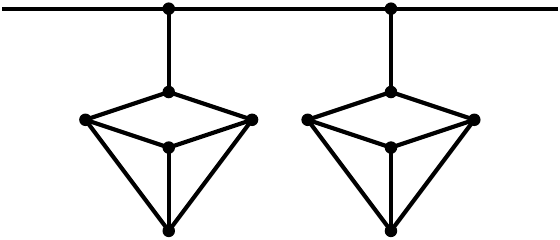}
\quad\includegraphics[width=0.4\textwidth]{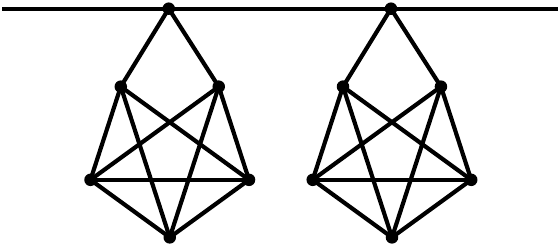}
  \caption{An infinite line may be decorated in order to obtain
regular graphs of degree $3$ and $4$. Similar contructions yield 
regular graphs with arbitrary degree $\De$ and connective constant $1$.}
  \label{fig:regular3/4}
\end{figure}

The main result of this paper, Theorem \ref{thm:cor0} is included in the following theorem,
of which the upper bound on $\mu(G)$ is already well known.

\goodbreak
\begin{theorem}\label{thm:cor1}
Let $\De \ge 2$, and let $G$ be an infinite, connected, 
$\De$-regular, vertex-transitive graph. 
We have $\mu(G) \le \De-1$, and in addition $\mu(G)
\ge \sqrt{\De-1}$  if either
\begin{letlist}
\item $G$ is simple, or
\item $G$ is non-simple and $\De \le 4$.
\end{letlist} 
\end{theorem}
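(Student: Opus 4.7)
\medskip
\noindent\textbf{Plan.} The upper bound $\mu(G) \le \De - 1$ is standard: a SAW starting at $v$ has $\De$ initial choices, and each subsequent step has at most $\De - 1$ continuations (since the reverse of the previous edge is forbidden), so $\sigma_n(v) \le \De(\De-1)^{n-1}$, giving $\mu(G) \le \De-1$.

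For the lower bound $\mu(G) \ge \sqrt{\De-1}$, by Theorem \ref{jmh} it suffices to show $\liminf_n \sigma_n(v)^{1/n} \ge \sqrt{\De-1}$ for some (hence every) vertex $v$. My proposed strategy is a \emph{bridge} argument in the spirit of Hammersley--Welsh. Choose a graph height function $h \colon V \to \ZZ$ (one is available for a vertex-transitive graph under natural hypotheses) and call a SAW $v_0, v_1, \ldots, v_n$ a \emph{bridge} if $h(v_0) < h(v_i) \le h(v_n)$ for all $1 \le i \le n$. Using vertex-transitivity, a bridge of length $m$ followed by an automorphism-translated bridge of length $n$ (moved so that its start coincides with the end of the first bridge) concatenates into a bridge of length $m+n$, yielding the supermultiplicative estimate $b_{m+n} \ge b_m b_n$, where $b_n$ counts bridges of length $n$. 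A Hammersley--Welsh-type argument, combined with \eqref{1215} and the subadditivity already exploited in the proof of Theorem \ref{thm:infsup}, then shows that $b_n^{1/n} \to \mu$.

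It therefore remains to produce sufficient bridges, namely $b_{2n} \ge (\De-1)^n$ up to polynomial factors. Given a bridge of length $2k$ ending at a vertex $u$, one seeks $\De-1$ two-step extensions that preserve both monotonicity with respect to $h$ and self-avoidance. Simplicity of $G$ is used here to ensure that each such extension is uniquely determined by its vertex sequence, so distinct edge-disjoint extensions really are distinct, and an averaging/pigeonhole argument over the $\De-1$ neighbors of $u$ at each vertex yields the required count. Iterating gives $b_{2n} \ge (\De-1)^n$ and hence $\mu \ge \sqrt{\De-1}$.

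\medskip
\noindent\textbf{Main obstacle.} The hard part is the construction of a height function $h$ that is genuinely compatible with the vertex-transitive action, together with the careful bookkeeping that ensures each ``pair of graph steps'' contributes a full factor of $\De-1$ without collision; simplicity is needed precisely to rule out parallel edges that would conflate distinct extensions. For the non-simple case (b), parallel edges create $2$-cycles which must be tracked explicitly in the bridge count; the restriction $\De \le 4$ keeps the local combinatorics sufficiently constrained that a case-by-case modification of the bridge argument still delivers the $(\De-1)^n$ lower bound, and the bridge graph $\LG_\De$ witnesses sharpness.
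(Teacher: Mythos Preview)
Your proposal has a genuine gap at the key step, and the overall strategy does not match the paper's.

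First, the existence of a height function $h:V\to\ZZ$ compatible with the vertex-transitive action is not automatic; it is a substantial structural hypothesis that fails for many vertex-transitive graphs. You acknowledge this only parenthetically (``under natural hypotheses''), but the theorem imposes no such hypothesis, so the bridge machinery cannot even be set up in general.

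Second, and more seriously, even granting a height function, the Hammersley--Welsh argument establishes that bridges grow at the \emph{same} exponential rate as SAWs; it does not by itself furnish a \emph{lower bound} on that rate. Your assertion that $b_{2n}\ge(\De-1)^n$ is precisely the content of the theorem, and the justification offered --- that a bridge ending at $u$ admits $\De-1$ two-step monotone self-avoiding extensions --- is unsupported. Nothing about vertex-transitivity or simplicity forces $\De-1$ of the neighbours of $u$ to have strictly greater height, nor does any ``averaging/pigeonhole'' argument recover this.

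The paper proceeds quite differently. It introduces a local condition $\Pi$ (each ``red'' edge blocking an infinite extension at a vertex $w$ of a SAW $\pi$ can be matched to a distinct ``blue'' edge landing earlier on $\pi$), and proves in Theorem~\ref{thm:main1} that $\Pi_v$ implies $\underline\mu(v)\ge\sqrt{\De-1}$ via a branching count on \emph{extendable} SAWs: along any extendable $2n$-step SAW the total number of blue (extendable) side-edges is at least $n(\De-2)$, and an induction on the arrangement of these branches yields $\widetilde\sigma_{2n}\ge(\De-1)^n$. Simplicity enters not to distinguish parallel extensions but through an edge-connectivity fact: a simple, connected, vertex-transitive, $\De$-regular graph has edge-connectivity $\De$, so Menger's theorem gives $\De$ edge-disjoint infinite SAWs from every vertex, which immediately verifies $\Pi$. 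Part~(b) is a direct case analysis verifying $\Pi$ for the finitely many local configurations arising when $\De\le 4$.
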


Part (a) answers a question posed by Itai Benjamini (personal communication).
We ask whether the lower bound is \emph{strict} for \emph{simple} graphs,
and whether part (b) may be extended to larger values of $\De$.
Proofs of theorems in this section are found in Section \ref{sec:regpf}.

For a graph satisfying the initial conditions of Theorem \ref{thm:cor1}, 
we have by Theorem \ref{jmh}
that $\ul\mu(v) = \ol \mu(v)=\mu$
for all $v \in V$.
The Cayley graph (see \cite{bab95})
of an infinite group with finitely many generators 
satisfies the hypothesis of Theorem \ref{thm:cor1}(a).  
If the assumption of vertex-transitivity is weakened to
quasi-transitivity, the best lower bound is $\mu \ge 1$,
as illustrated in Figure \ref{fig:regular3/4}.

The upper bound of Theorem \ref{thm:cor1} is an equality for 
the $\De$-regular tree $T_\De$, but is strict for non-trees,
even within the larger class of quasi-transitive graphs.
We prove the slightly more general fact following,
thereby extending an earlier result of Bode \cite[Sect.\ 2.2]{bode} for 
quotients of free groups.

\begin{theorem}\label{thm:upperbnd}
Let $G=(V,E)$ be an infinite, connected, quasi-transitive
graph (possibly with multiple edges), and let $\De \ge 3$. We have that $\mu(G)<\De-1$ if either
\begin{letlist}
\item $G$ is $\De$-regular and contains a cycle, or 
\item $\deg(v) \le \De$ for all $v \in V$, and there exists $w \in V$ with $\deg(w)\le \De-1$.
\end{letlist}
\end{theorem}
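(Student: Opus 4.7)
The strategy is to sharpen the familiar bound $\sigma_{n+1}(o) \le (\Delta - 1)\sigma_n(o)$, obtained by counting non-backtracking extensions of an $n$-step SAW. Writing $\sigma_n(o,v)$ for the number of $n$-step SAWs from $o$ to $v$, the refined recursion reads
\begin{equation*}
\sigma_{n+1}(o) \;\le\; \sum_{v\in V}(\deg(v)-1)\,\sigma_n(o,v) \;=\; (\Delta-1)\sigma_n(o) \;-\; \sum_{v\in V}(\Delta-\deg(v))\,\sigma_n(o,v)
\end{equation*}
for $n \ge 1$, since each SAW ending at $v$ has at most $\deg(v)-1$ non-backtracking extensions. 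If the correction term can be bounded below by $\epsilon\sigma_n(o)$ uniformly in $o$ and $n$, iteration together with Theorem \ref{jmh} yields $\mu(G) \le \Delta-1-\epsilon < \Delta-1$.

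For part (b), the correction term is supported on the orbit $W := \Aut(G)\cdot w$, on which $\Delta-\deg(\cdot) \ge 1$; it therefore suffices to prove a uniform estimate $\sum_{v\in W} \sigma_n(o,v) \ge \epsilon\sigma_n(o)$, that is, that a positive fraction of $n$-step SAWs from $o$ end in $W$. I would attempt this by a local-modification argument: by quasi-transitivity, fix $R$ such that every vertex of $G$ lies within graph-distance $R$ of $W$, and for each SAW $\pi$ of length $n$ ending at $u \notin W$, use a shortest $u$-to-$W$ path and surgery on the last at most $2R$ edges of $\pi$ to produce a SAW of length in $[n-R,n+R]$ ending in $W$. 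A $K$-to-one inverse count, with $K$ depending only on $G$ and $R$, then supplies the desired proportional bound.

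For part (a), $G$ is $\Delta$-regular so the correction term vanishes identically, and the cycle must be exploited directly. Let $C$ be a cycle of length $k$ passing through some orbit representative $o_0$. The non-backtracking SAW that traces $k-1$ consecutive edges of $C$ from $o_0$ cannot be extended by the edge that closes $C$, yielding a per-step deficit at such `cycle-threading' configurations. The plan is to combine this local deficit with the subadditivity~\eqref{121}: first show $\sigma_k(o_0) \le \Delta(\Delta-1)^{k-1}-c$ for some integer $c \ge 1$ by counting the forbidden $C$-traversals, then, using that by quasi-transitivity every vertex of $G$ lies within bounded distance of such a cycle, promote this constant deficit into an exponential gap $\sigma_n(o)\le C'(\Delta-1-\epsilon')^n$ via a bridge-type concatenation.

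The main obstacle in both parts is passing from a single per-step deficit to a uniform multiplicative bound on $\sigma_n(o)$. In (b), the hardest step is the $K$-to-one verification: one must ensure that the local-modification map is genuinely bounded-to-one, so that the fraction of SAWs produced with endpoint in $W$ is bounded below by a constant independent of $n$. In (a), it is a Kesten-style pattern statement: a cycle-threading configuration that is forced to recur at bounded spacing in $G$ must appear in a positive fraction of long SAWs, a conclusion that does not follow automatically from a single deficient vertex and which will require careful concatenation to establish.
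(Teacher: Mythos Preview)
Your diagnosis is accurate: converting a single local deficit into a uniform multiplicative gap is the crux, and the obstacles you flag are real. In (b), the map ``modify the last $2R$ steps to land in $W$'' is not well-defined as a bounded-to-one map into SAWs of length $n+O(1)$, because the short path from the endpoint to $W$ may meet $\pi$ arbitrarily far from its end; truncating there can shorten $\pi$ by much more than $R$, and repairing this is not a local matter. In (a), what you are asking for is essentially a pattern theorem for quasi-transitive graphs, a result considerably heavier than the theorem being proved. So the proposal is not a proof as it stands, and the missing steps are not routine.

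The paper sidesteps both difficulties by a device you are missing. Rather than controlling $\sigma_{n+1}/\sigma_n$ via endpoint statistics, it fixes a block length $N$ and bounds, uniformly over pairs $(u,e)$ with $e$ incident to $u$, the number $\sigma_N(u,e)$ of $N$-step SAWs from $u$ avoiding $e$. Since a $kN$-step SAW splits as a $(k{-}1)N$-step SAW followed by an $N$-step SAW from its endpoint avoiding the last edge traversed, one has $\sigma_{kN}(v)\le\sigma_{(k-1)N}(v)\cdot\max_{u,e}\sigma_N(u,e)$, and it suffices to exhibit a single $N$ with $\sigma_N(u,e)\le(\Delta-1)^N-1$ for every $(u,e)$. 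This requires merely \emph{one} missing leaf in the depth-$N$ tree of walks, not a positive-fraction estimate: by quasi-transitivity every vertex lies within some bounded distance $M$ of a cycle of length $l$ (case (a)) or of a vertex of degree at most $\Delta-1$ (case (b)); from any $(u,e)$ one runs a SAW avoiding $e$ out to distance about $4M$ and detours to the nearby defect, yielding one walk of bounded length that cannot be continued in all $\Delta-1$ non-backtracking directions. Taking $N$ as the maximum such length over the finitely many orbit-types of $(u,e)$ gives $\mu\le[(\Delta-1)^N-1]^{1/N}<\Delta-1$ directly, with no surgery and no pattern theorem.
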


It is a natural problem to decide when
the connective constant of a graph decreases strictly as further cycles are added.
Theorem \ref{thm:upperbnd} is a  step in this direction. When
the graphs are required to be regular, this question may be phrased in terms
of graphs and quotient graphs, and it is treated in \cite{GrL3}.

We shall deduce Theorem \ref{thm:cor1} from the stronger Theorem \ref{thm:main1} following.
The latter assumes a certain condition which we introduce next.
This condition
plays a role in excluding the graphs of Figure \ref{fig:regular3/4}. 
It is technical, 
but is satisfied by a variety of graphs of interest.

Let $G=(V,E)$ be an infinite, connected, $\De$-regular graph, 
possibly with multiple edges.  For distinct edges $e,e'\in E$ with a common vertex $w \in V$, 
a SAW is said to \emph{traverse} the triple $ewe'$ if it contains the mid-edge
$e$ followed  consecutively by the vertex $w$ and the mid-edge $e'$.   For $v \in V$,
let $I(v)$ be the set of infinite SAWs from $v$, and $I(e)$ the
corresponding set starting at the mid-edge of $e \in E$. 
Let $\pi\in I(v)$, let $ewe'$ be a triple traversed by $\pi$, and write $\pi_w$ for the 
finite subwalk of $\pi$ between $v$ and $w$. Let $e'' \ne e',e''$ be an edge incident to $w$.
We colour $e''$ \emph{blue} if there exists $\pi''\in I(v)$
that follows $\pi_w$ to $w$ and then takes edge $e''$, and 
we colour $e''$ \emph{red} otherwise. Let $R_{\pi,w} = \{e_j: j=1,2,\dots,r\}$
be the set of red edges corresponding to the pair $(\pi,w)$.

We make two notes. First, an edge of the form $\langle u,w\rangle$ with $u \in \pi_w$ can
be red when seen from $w$ and blue when seen from $u$. 
Thus, correctly speaking, colour is a property of a \emph{directed}
edge. We shall take care over this when necessary.
Secondly, suppose there is a group of
 two or more \emph{parallel} edges $e''=\langle w, w'\rangle$
with $w \in \pi$, $e'' \notin \pi$. Then all such edges have the same colour. 
They are all blue if and only if 
there exists $\pi''\in I(v)$
that follows $\pi_w$ to $w$ and then takes one of these edges.

The vertex $v \in V$ is
said to \emph{satisfy condition} $\Pi_v$ if, for all $\pi\in I(v)$
and all triples $ewe'$ traversed by $\pi$, there exists
a set $ F(\pi, w) = \{f_j = \langle x_j, y_j\rangle : j=1,2,\dots, |R_{\pi,w}|\}$ 
of distinct edges of $G$ such that,
for $1\le j \le |R_{\pi,w}|$, 
\begin{letlist}
\item $y_j \in \pi_w$, $y_j \ne w$,
\item there exists a SAW from $w$ to $x_j$ with first edge $e_j$,
 that is vertex-disjoint 
from $\pi_w$ except at its starting vertex $w$.
\end{letlist}
The graph $G$ is said to \emph{satisfy condition} $\Pi$ if
every vertex $v$ satisfies condition $\Pi_v$. 
The set $F(\pi,w)$ is permitted to contain parallel edges.
By reversing the SAWs in (b) above, we see that every edge 
$\langle x_j,y_j\rangle \in F(\pi,w)$ is blue
when seen from $y_j$. 
Note that $F(\pi,v)=\es$ for $\pi\in I(v)$.

\begin{figure}[htb]
 \centering
    \includegraphics[width=0.8\textwidth]{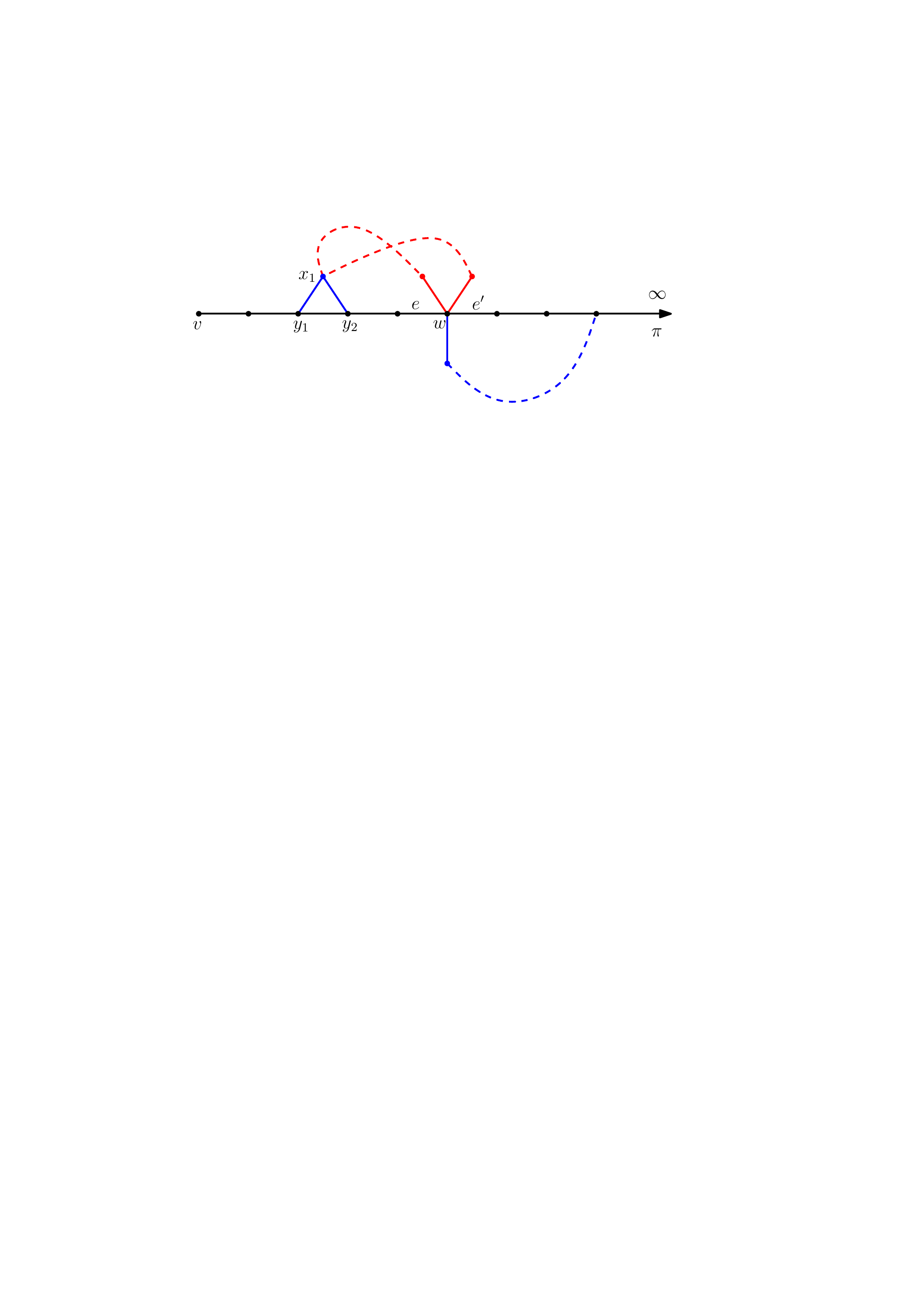}
  \caption{An illustration of the condition $\Pi_v$
with $\De=5$ and $x_1=x_2$.}
  \label{fig:condition}
\end{figure}

Condition $\Pi_v$ may be expressed in a simpler form for cubic graphs (with $\De=3$). 
In this case, 
for each pair $(\pi,w)$ there exists at most one red edge. Therefore, $\Pi_v$ is equivalent 
to the following: for every $\pi\in I(v)$ and every triple $ewe'$ traversed by $\pi$,
there exists $\pi''\in I(e'')$  beginning $e''w''$,
where $e'' = \langle w, w''\rangle$ is the third edge incident
with $w$. 
It is thus sufficient for a cubic graph $G$ that every vertex lies in some
doubly-infinite self-avoiding walk of $G$. 

\begin{theorem}\label{thm:main1}
Let $\De\ge 2$, and let $G=(V,E)$ be an infinite, connected $\De$-regular graph. If $v \in V$ satisfies
condition $\Pi_v$, we have $\ul\mu(v) \ge \sqrt{\De-1}$.
The \loopgraph\ $\LG_\De$ satisfies condition $\Pi$,
and $\ul\mu(v) = \mu = \sqrt{\De-1}$ for
all vertices $v$.
\end{theorem}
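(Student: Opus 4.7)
The plan is to establish the two claims separately. For the main inequality $\underline\mu(v)\ge\sqrt{\De-1}$, my strategy is to construct, for each $k$, a family of at least $(\De-1)^k$ distinct SAWs from $v$ of length at most $2k$; a standard pigeonhole and truncation argument then yields $\underline\mu(v)\ge\sqrt{\De-1}$ (the pigeonholed length $n\le 2k$ must be close to $2k$ because of the trivial upper bound $\sigma_n\le\De(\De-1)^{n-1}$, so the growth is visible along a subsequence, and the rest follows from standard manipulations). The construction is built around a fixed infinite SAW $\pi=v_0e_0v_1e_1\cdots\in I(v)$, which exists by K\"onig's lemma applied to the locally finite tree of SAW-prefixes from $v$.

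The idea is to encode choice sequences $(\ell_1,\ldots,\ell_k)\in\{0,1,\ldots,\De-2\}^k$ as SAWs from $v$. At each vertex $w=v_{2i-2}$ of $\pi$, the $\De-1$ non-backtracking edges consist of $e_{2i-2}$ (corresponding to $\ell_i=0$) together with $\De-2$ other edges classified as blue or red. A blue edge extends $\pi_{v_{2i-2}}$ immediately to a new SAW branch; a red edge $e_j$ is processed via condition $\Pi_v$ applied to the triple $(e_{2i-3},v_{2i-2},e_{2i-2})$, which supplies a companion edge $f_j=\langle x_j,y_j\rangle$ with $y_j=v_m\in\pi_{v_{2i-2}}\setminus\{v_{2i-2}\}$ and a SAW $\tau_j$ from $v_{2i-2}$ to $x_j$ beginning with $e_j$ and vertex-disjoint from $\pi_{v_{2i-2}}$ except at $v_{2i-2}$. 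The detour $f_j\cdot\overline{\tau_j}$ then replaces the sub-SAW of $\pi$ between $v_m$ and $v_{2i-2}$, producing a SAW from $v$ that ends at $v_{2i-2}$ but differs from $\pi_{v_{2i-2}}$ in its middle. After processing stage $i$, the construction follows $\pi$ for two more steps and moves on to stage $i+1$, always referring back to $\pi$ and to condition $\Pi_v$ applied at the appropriate triple of $\pi$.

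The main step is proving injectivity of the resulting map from choice sequences to SAWs. I would argue by induction on $k$, recovering $\ell_1$ from the first deviation of the output SAW from $\pi$, then stripping that initial segment and recursing. Two features of condition $\Pi_v$ are essential: the distinctness of the $f_j$ as edges of $G$ (which ensures different red-choice labels produce visibly different detour edges) and the vertex-disjointness clause in (b) (which prevents a detour from being absorbed into later portions of the walk). The main obstacle I anticipate is managing interactions between successive detours, which may share vertices with each other and with the remaining portions of $\pi$; handling this cleanly and verifying that the recovery procedure reads off the correct choice sequence is the technical heart of the argument.

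For $\LG_\De$, both claims are verified directly. By the vertex-transitivity of $\LG_\De$ (witnessed by the involution $x\mapsto -x+1$) I may take $v=0$. A SAW from $0$ moves monotonically along $\ZZ$ in one direction, with the only freedom being the choice of parallel edge at each bundle crossing; a direct count yields $\sigma_n(0)=(\De-1)^{\lceil n/2\rceil}+(\De-1)^{\lfloor n/2\rfloor}$, so $\mu(\LG_\De)=\sqrt{\De-1}$, and Theorem \ref{jmh} gives $\underline\mu(0)=\mu$. To verify $\Pi$, I analyse each triple $ewe'$ along a forward $\pi$: the red edges are exactly the unused parallel edges back to the previous vertex of $\pi_w$, and a set $F(\pi,w)$ of distinct companion edges can be chosen from parallel edges at an earlier bundle of $\pi_w$, the bundle multiplicity $\De-1$ providing enough distinct $f_j$'s.
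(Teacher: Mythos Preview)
Your construction has a genuine gap at exactly the point you flag as ``the technical heart'': handling red edges by detours cannot work as described. The SAW $\tau_j$ supplied by condition $\Pi_v$ carries no length bound, so your walks are not of length $\le 2k$ and the pigeonhole step never starts. More seriously, $\tau_j$ is only guaranteed vertex-disjoint from $\pi_{v_{2i-2}}$; it may freely visit $v_{2i-1}, v_{2i},\dots$, so after splicing in the detour you cannot in general ``follow $\pi$ for two more steps'' without self-intersection. And if an earlier stage has already replaced a segment, the walk you are building no longer coincides with $\pi_{v_{2i-2}}$, so the disjointness hypothesis on $\tau_j$ tells you nothing about the walk actually under construction. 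Your injectivity scheme fails for the same reason: a red-detour at stage $i$ rewrites the segment between $v_m$ and $v_{2i-2}$, and $m$ may be $0$ or $1$, so a late choice can overwrite the portion of the walk that was supposed to encode $\ell_1$.

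The paper avoids all of this by never building detours. It uses the companion edges $f_j$ only as an \emph{accounting device}: the key lemma is that the sets $F_s = F(\pi,v_s)$ are pairwise disjoint as $s$ varies along any extendable $2n$-step SAW $\pi$ (proved by concatenating two witnessing paths at their first meeting point to contradict redness). This yields $\sum_s b_s \ge \sum_s |F_s| = \sum_s r_s$, hence $\sum_{s<2n} b_s \ge n(\De-2)$: every extendable $2n$-step SAW has at least $n(\De-2)$ blue branches. A short combinatorial induction then shows that any such branching structure forces at least $(\De-1)^n$ extendable $2n$-step SAWs, giving $\wt\sigma_{2n}\ge(\De-1)^n$ directly, with no encoding map and no detour lengths. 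Your treatment of $\LG_\De$ is fine.
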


It is trivial that the $\De$-regular tree $T_\De$ satisfies condition $\Pi$ and
has connective constant $\De-1$, and it was noted in \eqref{1} that $\De-1$ is an upper
bound for connective constants of $\De$-regular graphs. 
Let $\De \ge 2$ and $\sqrt{\De-1} \le \rho \le \De-1$. By replacing the edges
of $T_\De$ by finite segments of the \loopgraph\ $\LG_\De$, one may construct graphs 
satisfying condition $\Pi$ with connective constant
$\rho$. 
Therefore, the set of connective constants of infinite, connected, $\De$-regular
graphs satisfying condition $\Pi$ is exactly the closed interval $[\sqrt{\De-1}, \De-1]$.

\section{Proofs of Theorems \ref{thm:cor1}--\ref{thm:main1}}\label{sec:regpf}

\begin{proof}[Proof of Theorem \ref{thm:main1}]

Let $G$ satisfy the given conditions. A finite SAW is called \emph{extendable}
if it is the starting sequence of some infinite SAW.  Let $v \in V$ satisfy condition $\Pi_v$, and 
let $\wt \s_n$ be the number of extendable $n$-step SAWs from $v$.
We claim that
\begin{equation}\label{3}
\liminf_{n\rightarrow\infty} \wt \s_n ^{1/n}\geq \sqrt{\De-1},
\end{equation}
from which the inequality of the theorem follows. The claim is trivial when $\De=2$,
and we assume henceforth that $\De \ge 3$.
(Since this paper was written, the growth rate for extendable SAWs has been considered in \cite{GHLP}.)

Let $\pi = v_0 e_0v_1\cdots e_{2n-1}v_{2n}$ 
be an extendable $2n$-step SAW from $v_0=v$,
and, for convenience,  augment $\pi$ with a mid-edge $e_{-1}$ ($\ne e_0$) incident to $v_0$. 
Thus, $\pi$ traverses the triples $e_{s-1}v_se_s$ for $0 \le s < 2n$.
Let $r_s$ and $b_s$ be the numbers of red and blue edges, 
respectively, seen from $v_s$, so  that
\begin{equation}\label{4}
r_s+b_s = \De -2,  \qquad 0 \le s < 2n.
\end{equation}
We claim that
\begin{equation}
\label{5}
\sum_{s=0}^{2n-1} b_s \ge  n (\De-2),
\end{equation}
and the proof of this follows.

For $0\le s < 2n$, let $F_s = F(\pi,v_s)$, and recall
that $F(\pi,v_0)=\es$. 
We claim that $F_s \cap F_t = \es$ for $s \ne t$.
Suppose on the contrary that $0 \le s < t < 2n$ and $f \in F_s \cap F_t$
for some edge $f = \langle x,y\rangle$ with $y = v_u$ and $u<s$. 
See Figure \ref{fig:distinct}. There exists a SAW $\om_s$ from $v_s$ to $x$ such that:
(i) the first edge of $\om_s$, denoted $e_s$, is red,  and
(ii) $\om_s$ is vertex-disjoint from $\pi_{v_s}$ except at $v_s$.
Similarly, there exists a 
SAW $\om_t$ from $v_t$ to $x$ whose first edge $e_t$ is red,
and which is vertex-disjoint from $\pi_t$ except at $v_t$. 
Let $z$ be the earliest vertex of $\om_s$ lying in $\om_t$.
Consider the infinite SAW $\om'$ that starts at $v_s$, takes edge $e_s$, follows $\om_s$ to $z$,
then $\om_t$ and  $e_t$ backwards to $v_t$, and then follows $\pi \sm \pi_{v_t}$. Thus, $\om'$ is 
an infinite SAW starting with $v_se_s$ that is vertex-disjoint from $\pi_{v_s}$ except at $v_s$. 
This contradicts the colour of $e_s$ (seen from $v_s$), 
and we deduce that  $F_s\cap F_t = \es$
as claimed.

\begin{figure}[htb]
 \centering
    \includegraphics[width=0.8\textwidth]{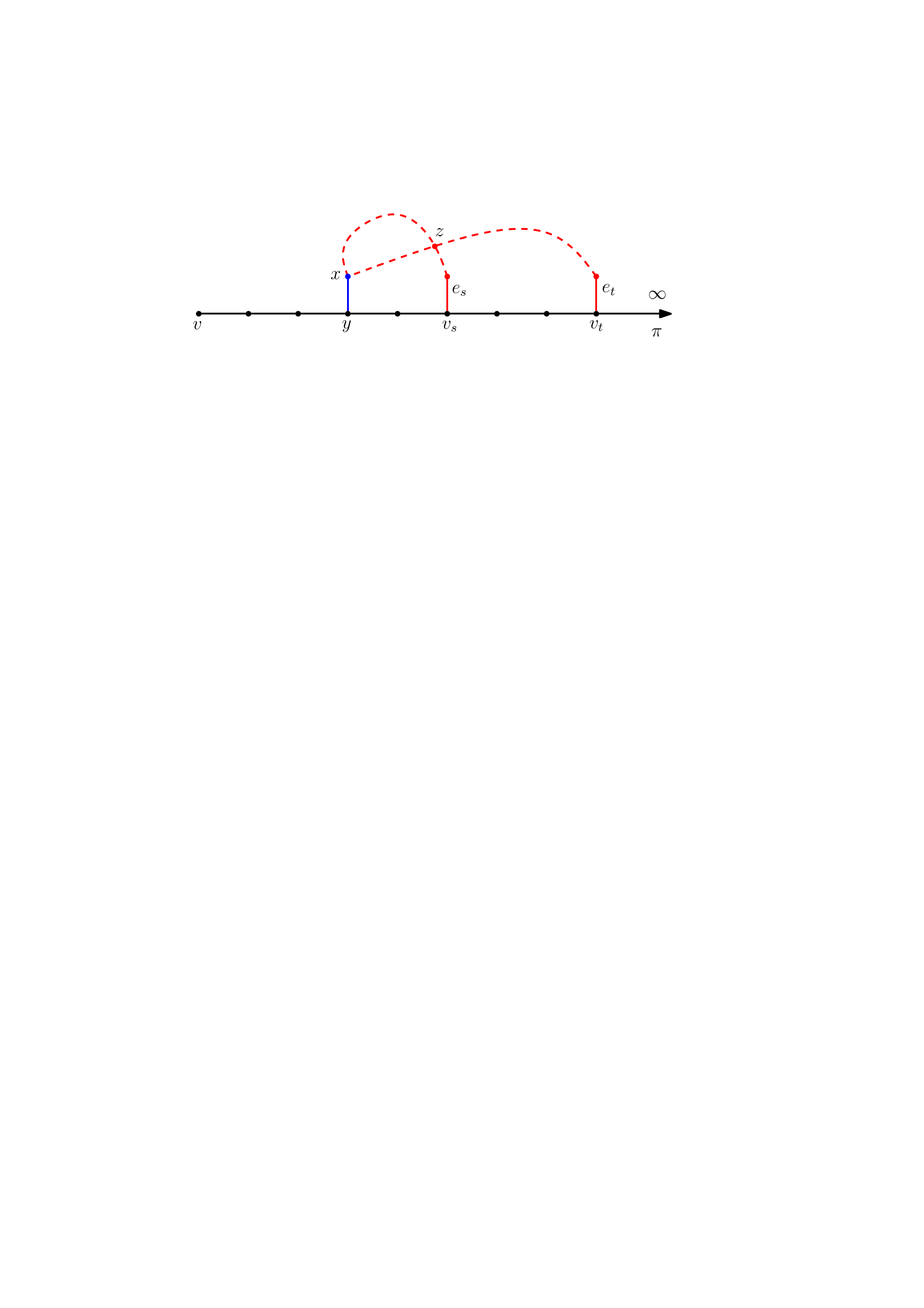}
  \caption{An illustration of the proof that $F_s \cap F_t = \es$.}
  \label{fig:distinct}
\end{figure}

Now,
$$
\sum_{s=0}^{2n-1} b_s \ge \sum_{s=0}^{2n-1} |F_s| =\sum_{s=0}^{2n-1} r_s.
$$  
The total number of blue/red edges is $2n(\De-2)$, and \eqref{5} follows.

We show next that \eqref{5} implies the claim of the theorem.
A \emph{branch} of $\pi$ (with \emph{root} $x$)
is an edge $e=\langle x,y\rangle$ such that $x \in \pi$, 
$x \ne v_{2n}$, $e \notin\pi$,
and the path $x e y$ lies in some $\pi'\in I(v)$.
A set of branches with the same root is called a \emph{group}.
By \eqref{5}, 
$\pi$ has at least $n(\De-2)$ branches, namely the blue edges. 
Each of these branches gives rise to a further extendable $2n$-step SAW from $v$, 
and similarly every such SAW has at least
$n(\De-2)$ such branches.  
We wish to understand how to group the branches on these walks
in order to minimize the total number of ensuing $2n$-step SAWs.

Let $B=\a(\De-2)+\b$ with $0\le \b < \De-2$, and let
$\pi$ be an extendable $2n$-step SAW from $v$, as above. 
Suppose 
that there are exactly $B$ branches along every ensuing 
(extendable) $2n$-step SAW $\pi'$,
and that no vertex of such a $\pi'$ is the endvertex of more than $\De-2$ branches. If the 
group of  branches of $\pi$ closest to $v$ has size $\b$, and all other groups have size $\De-2$, 
the number of ensuing $2n$-step SAWs is 
$g(B) := (\b+1)(\De-1)^\a$.
It will suffice  to show that, if every such $2n$-step SAW has exactly $B$ branches, 
then the total number of SAWs is at least $g(B)$. We  prove this by induction on $B$.

The claim is trivially true when $B=1$, since both numbers then equal $2$.
Suppose $B_0 \ge 1$ is such that the claim is true for $B \le B_0$,
and consider the case $B=B_0+1$.
Let $B=\a(\De-2)+\b$ as above. Pick $\pi$ as above, and suppose the first
group of branches along $\pi$ has size $\g$ for some
$\g$ satisfying $1\le \g\le\De-2$. 

There are two cases depending on whether or not $\g\le\b$. 
Assume first that $\g \le \b$.
The number of SAWs is at least $(\g+1)g(B-\g)$, which satisfies
$$
(\g+1) g(B-\g) = (\g+1)(\b-\g+1)(\De-1)^\a  \ge g(B),
$$
as required. In the second case ($\g > \b$), the corresponding inequality
$$
(\g+1)g(B-\g) = (\g+1)(\De-2 + \b-\g+1)(\De-1)^{\a-1} \ge g(B)
$$
is  quickly checked (since the middle expression is an upwards pointing 
quadratic in $\gamma$, it suffices
to check the two extremal cases $\gamma = \b+1, \De-2$), and the induction is complete. 

With $B \ge n(\De-2)$, we find that $\wt\s_{2n} \ge (\De-1)^n$, whence 
\begin{equation*}
\liminf_{n\to\oo} \wt\s_{2n}^{1/n} \ge  \De-1,
\end{equation*}
and \eqref{3} follows since $\wt\s_k$ is non-decreasing in $k$.

Let $\De \ge 2$. It is easily seen that the \loopgraph\ $\LG_\De$ satisfies
condition $\Pi$ and has connective constant $\sqrt{\De-1}$, as in \eqref{2}
\end{proof}

\begin{proof}[Proof of Theorem \ref{thm:cor1}]
The upper bound for $\mu(G)$ is as in \eqref{1}.

\noindent
(a) Let $G=(V,E)$ satisfy the given conditions.
We claim that, for $v \in V$, there exist $\De$ edge-disjoint
infinite SAWs from $v$. 
It follows that $G$ satisfies condition $\Pi$, and hence
part (a).

The claim is proved as follows. Let $\lf=\lf(G)$ be the least number of edges whose removal
disconnects  $G$ into components at least one of which is finite.
By \cite[Lemma 3.3]{BW} (see also \cite[Chap.\ 12, Prob.\ 14]{Lov}),
we have that $\lf = \De$. It is a consequence of
Menger's theorem that there exist $\lf$ edge-disjoint
infinite SAWs from $v$. A sketch of this presumably standard fact follows.
Let $n \ge 1$, and let $B_n$ be the graph obtained from
$G$ by identifying all vertices distance $n+1$ or more from $v$.
The identified vertex is denoted $\partial B_n$. 
Since $v$ has degree $\De$ and $\lf=\De$, the minimum
number of edges whose removal disconnects $v$ from $\partial B_n$ is
$\De$. By Menger's theorem (see \cite[Sect.\ 3.3]{Die}),
there exist $\De$ edge-disjoint SAWs from $v$ to $\partial B_n$.
Therefore, for all $n$, $G$ contains $\De$ edge-disjoint $n$-step SAWs from $v$.
Since $G$ is locally finite, this implies the above claim.

\noindent (b)
When $\De=2$, $G$ is simple. If $\De=3$
and $G$ is non-simple, it is immediate  that every $v$ has property $\Pi_v$,
and the claim follows by Theorem \ref{thm:main1}.
Suppose $\De=4$. There are three types of non-simple graph,
depending on the groupings of the parallel edges incident to a given vertex.
By consideration of these types, we see that only one type merits a detailed
argument, namely that in which each vertex is adjacent to exactly three other vertices,
and we restrict ourselves henceforth to this case.

Two paths from $w \in V$ are called \emph{vertex-disjoint} if
$w$ is their unique common vertex.
Let $\pi\in I(v)$, with vertex-sequence $(v, v_1,v_2,\dots)$. Then $v_n$ is the
endpoint of two vertex-disjoint SAWs of respective lengths $n$ and $\oo$.
By vertex-transitivity, for every $n \ge 1$ and $w \in V$, $w$ is the
endpoint of two vertex-disjoint SAWs of respective lengths $n$ and $\oo$.
Since $G$ is locally finite, every $w \in V$ is the endpoint of two vertex-disjoint infinite SAWs.  
We write the last statement as $v \Longrightarrow \oo$.

Let $\pi \in I(v)$, $w=v_k$ with $k \ge 1$, 
and consider the triple $ewe'$ traversed by $\pi$. By assumption,
$w$ has three neighbours $w_1$, $w_2$, $w_3$ in $G$, labelled in such a way that
$w_1 = v_{k-1}$ and $w_2= v_{k+1}$. 
For some $i$, there are two parallel edges of the form $\langle w,w_i\rangle$,
as illustrated in Figure \ref{fig:four}. 

\begin{figure}[htbp]
  \centering
\includegraphics[width=0.9\textwidth]{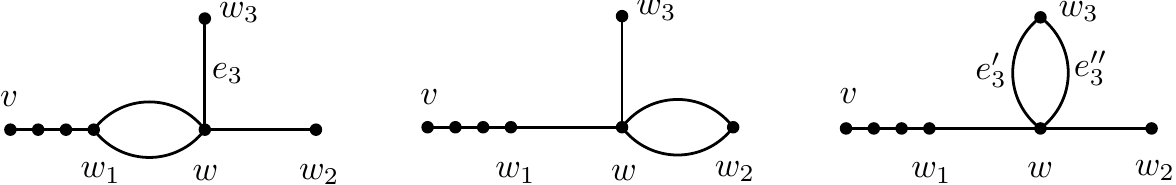}
   \caption{The three cases in the proof Theorem \ref{thm:cor1}(b).}
\label{fig:four}
\end{figure}

There are several cases to consider. If $w_3 \in \pi$, say $w_3=v_M$,
any edge $\langle w, w_3\rangle$ is red if
$M<k$ and blue otherwise. The situation is more interesting if $w_3 \notin \pi$, and we assume this henceforth.

Consider the first case in  Figure \ref{fig:four} (the second case is similar).
The edge $e_1=\langle w, w_1\rangle$ not in $\pi$ is red
(seen from $w$), and contributes itself to 
the set $F(\pi,w)$. Suppose $e_3=\langle w,w_3\rangle$ is red (if it is blue, there is nothing
to prove). Since $w_3 \Longrightarrow \oo$, there exists an infinite SAW $\nu$ from $w_3$
not using $e_3$. Since $e_3$ is assumed red, 
there exists a first vertex $z$ of $\nu$ lying in $\pi$, and furthermore $z =v_K$
for some $K < k$. We add to $F(\pi,w)$ the last edge of $\nu$ before $z$.

Consider the third case in Figure \ref{fig:four}, with parallel edges
$e_3'=\langle w, w_3\rangle$, $e_3''=\langle w, w_3\rangle$.
Since $e_3'$ and $e_3''$ have the same colour we may restrict ourselves to the case when both are red.
Since $w_3 \Longrightarrow \oo$,  there exists an infinite SAW $\nu$ from $w_3$
using neither $e_3'$ nor $e_3''$. Since $e_3'$ and $e_3''$ are assumed red,
there exists an earliest vertex $v_K$ of $\nu$ lying
in $\pi$, with $K<k$. Write
$\nu'$ for the sub-path of $\nu$ that terminates at $v_K$, and $f(\nu')$ 
for the final edge of $\nu'$. 

Let $g=\langle w_3,x\rangle$ 
be the edge incident to $w_3$ other than $e_3'$, $e_3''$, and the first edge of $\nu$.
We construct a path $\rho$ from $w_3$ with first edge $g$, as follows.
Suppose $\rho$ has been found up to some vertex $z$.
\begin{numlist}
\item If $z$ has been visited earlier by $\rho$, we exit $z$ along the 
unique edge not previously traversed by $\rho$. Such an edge exists since 
$G$ is $4$-regular.
\item If $z$ lies in $\nu'$, we exit $z$ along the unique edge
lying in neither $\nu'$ nor the prior part of $\rho$.
\item If $z \in \pi$, say $z=v_L$, we stop the construction, and write $f(\rho)$ 
for the final edge traversed.
\end{numlist}
Since $e_3'$ and $e_3''$ are red, Case 3 occurs for some $L<k$.
By construction, $\rho$ and $\nu'$ are edge-disjoint, whence
$f(\rho) \ne f(\nu')$.
Corresponding to the two red edges $e_3'$, $e_3''$, we
have the required set $F(\pi,w) = \{f(\nu'),f(\rho)\}$.

In conclusion, every $v \in V$ has property $\Pi_v$,
and the claim follows by Theorem \ref{thm:main1}.
\end{proof}

\begin{proof}[Proof of Theorem \ref{thm:upperbnd}]
Let $u\in V$ and let $e \in E$ be incident to $u$.
Let $\si_n(u,e)$ be the number
of $n$-step SAWs from $u$ that do not traverse $e$.
We shall prove, subject to either (a) or (b), that there exists $N =N(G) \ge 3$,  such that
\begin{equation}\label{g456}
\si_N(u,e) \le (\De-1)^N-1 \qquad\mbox{for all such pairs $u$, $e$}.
\end{equation}

Assume first that (a) holds.
By quasi-transitivity, there exist $M, l \in \NN$ and a cycle $\rho$ of length $l$
such that, for $v \in V$,
there exist $w\in V$ and $\a\in \Aut(G)$ such that $d_G(v,w)<M$ and 
\begin{equation}\label{g458}
w \in\a(\rho).
\end{equation}

Let $C(u,e)$ be the subset of $V$ reachable from $u$ along paths not using $e$.
If $|C(u,e)|<\oo$, then $\si_n(u,e)=0$ for large $n$,
whence \eqref{g456} holds for all $N$ larger than some $N(u,e)$.
Assume that  $|C(u,e)|=\oo$. 
Let $\pi=(\pi_0,\pi_1,\dots)$ be an infinite SAW from $u$ not using $e$.
This walk has a first vertex, $\pi_R$ say, lying at
distance $4M$ from $u$. By the definition of $M$, there exists $k=k(u,e,M)$
satisfying $R-M \le k \le R+M$, and a $k$-step SAW $\pi'$ from $u$ not using $e$,
such that $\pi'$ has 
final endpoint $w'$ lying in $\a'(\rho)$ for some $\a'\in\Aut(G)$. 
We may represent the set of SAWs from $u$, not using $e$,
as a subtree of the rooted tree of degree $\De$ (excepting 
the root, which has degree $\De-1$). By counting the number of paths in that tree,
we deduce that, for $N \ge N_0 := k+l+1$,  the number of
such $N$-step walks is no
greater than $(\De-1)^N-1$. 

Since $G$ is quasi-transitive, $N_0<\oo$ may be picked
uniformly in $u$, $e$. Inequality \eqref{g456} is proved in case (a).

If (b) holds, condition \eqref{g458} is replaced by $\deg(w) \le \De-1$,
and the conclusion above is valid for $N \ge k+2$.
For both cases (a) and (b), \eqref{g456} is proved.

By considering the last edge traversed by a $(k-1)N$-step
SAW from $v$, we have that
$$
\si_{kN}(v) \le \si_{(k-1)N}(v) \bigl[(\De-1)^N-1\bigr], 
\qquad k \ge 2,
$$
and, furthermore, $\si_N(v) \le \De[(\De-1)^{N}-1]$. 
Therefore,
$$
\mu = \lim_{n\to\oo} \s_n(v)^{1/n} \le \bigl[(\De-1)^N-1\bigr]^{1/N}
< \De-1,
$$
and the theorem is proved. 
\end{proof}

\section*{Acknowledgements} 
This work was supported in part by the Engineering
and Physical Sciences Research Council under grant EP/103372X/1.
It was completed during a visit by GRG to the Theory Group
at Microsoft Research. 
The authors thank Hubert Lacoin for 
indicating an error in an earlier version of this paper.

\bibliography{saw2-final2}

\providecommand{\bysame}{\leavevmode\hbox to3em{\hrulefill}\thinspace}
\providecommand{\MR}{\relax\ifhmode\unskip\space\fi MR }
\providecommand{\MRhref}[2]{%
  \href{http://www.ams.org/mathscinet-getitem?mr=#1}{#2}
}
\providecommand{\href}[2]{#2}
\begin{thebibliography}{10}

\bibitem{a04}
S.~E. Alm, \emph{Upper and lower bounds for the connective constants of
  self-avoiding walks on the {A}rchimedean and {L}aves lattices}, J. Phys. A:
  Math. Gen. \textbf{38} (2005), 2055--2080.

\bibitem{AJ90}
S.~E. Alm and S.~Janson, \emph{Random self-avoiding walks on one-dimensional
  lattices}, Commun. Statist. Stoch. Mod. \textbf{6} (1990), 169--212.

\bibitem{bab95}
L.~Babai, \emph{Automorphism groups, isomorphism, reconstruction}, Handbook of
  Combinatorics, vol.~II, Elsevier, Amsterdam, 1995, pp.~1447--1540.

\bibitem{BW}
L.~Babai and M.~E. Watkins, \emph{Connectivity of infinite graphs having a
  transitive torsion group action}, Arch. Math. \textbf{34} (1980), 90--96.

\bibitem{bdgs}
R.~Bauerschmidt, H.~Duminil-Copin, J.~Goodman, and G.~Slade, \emph{Lectures on
  self-avoiding-walks}, Probability and Statistical Physics in Two and More
  Dimensions (D.~Ellwood, C.~M. Newman, V.~Sidoravicius, and W.~Werner, eds.),
  Clay Mathematics Institute Proceedings, vol.~15, CMI/AMS publication, 2012,
  pp.~395--476.

\bibitem{bode}
J.~S. Bode, \emph{Isoperimetric constants and self-avoiding walks and polygons
  on hyperbolic {C}oxeter groups}, Ph.D. thesis, Cornell University, 2007,
  \url{http://dspace.library.cornell.edu/bitstream/1813/7522/1/thesis.pdf}.

\bibitem{Die}
R.~Diestel, \emph{{Graph Theory}}, Springer, Berlin, 2010.

\bibitem{ds}
H.~Duminil-Copin and S.~Smirnov, \emph{The connective constant of the honeycomb
  lattice equals $\sqrt{2+\sqrt 2}$}, Ann. Math. \textbf{175} (2012),
  1653--1665.

\bibitem{f}
P.~Flory, \emph{{Principles of Polymer Chemistry}}, Cornell University Press,
  1953.

\bibitem{G99}
G.~R. Grimmett, \emph{Percolation}, 2nd ed., Springer, Berlin, 1999.

\bibitem{G-rcm}
\bysame, \emph{{The Random-Cluster Model}}, Springer, Berlin, 2006.

\bibitem{GHLP}
G.~R. Grimmett, A.~E. Holroyd, and Y.~Peres, \emph{Extendable self-avoiding
  walks},  (2013), in preparation.

\bibitem{GrL2}
G.~R. Grimmett and Z.~Li, \emph{Self-avoiding walks and the {F}isher
  transformation},  (2012), \url{http://arxiv.org/abs/1208.5109}.

\bibitem{GrL3}
\bysame, \emph{Strict inequalities for connective constants of transitive
  graphs},  (2013), \url{http://arxiv.org/abs/1301.3091}.

\bibitem{jmhII}
J.~M. Hammersley, \emph{Percolation processes {II. T}he connective constant},
  Proc. Camb. Phil. Soc. \textbf{53} (1957), 642--645.

\bibitem{j04}
I.~Jensen, \emph{Improved lower bounds on the connective constants for
  two-dimensional self-avoiding walks}, J. Phys. A: Math. Gen. \textbf{37}
  (2004), 11521--11529.

\bibitem{Lac}
H.~Lacoin, \emph{Non-coincidence of quenched and annealed connective constants
  on the supercritical planar percolation cluster},  (2012),
  \url{http://arxiv.org/abs/1203.6051}.

\bibitem{Lov}
L.~Lov{\'a}sz, \emph{{Combinatorial Problems and Exercises}}, North-Holland
  Publishing Co., Amsterdam, 1979.

\bibitem{ms}
N.~Madras and G.~Slade, \emph{{Self-Avoiding Walks}}, Birkh\"auser, Boston,
  1993.

\end{thebibliography}
\bibliographystyle{amsplain}

\end{document}